\theoremstyle{definition}
\newtheorem{theorem}{Theorem}[section]
\newtheorem{remark}[theorem]{Remark}
\numberwithin{equation}{section}
\newtheorem{definition}[theorem]{Definition}
\theoremstyle{definition}
\title{Geometric study of Gardner equation}
\author{Mehdi Nadjafikhah\thanks{Corresponding author: Department of Pure Mathematics, School of Mathematics, Iran University of Science and Technology, Narmak, Tehran, 16846-13114, Iran. e-mail:~m\_nadjafikhah@iust.ac.ir} \and Ardavan Mokhtary\thanks{Department of Mathematics, Payame Noor University, Lashkarak, Tehran, 19395-4697, Iran. e-mail:~rdmokhtary@gmail.com}}
\begin{document}
\maketitle
\begin{abstract}
In this paper, we apply the method of approximate transformation groups proposed by Baikov, Gaziziv and Ibragimov \cite{Baikov1,Baikov2}, to compute the first-order approximate symmetry  for the Gardner equations with the small parameters. We compute the optimal system and analyze some invariant solutions of these types of equations. Particularly, general forms of approximately Galilean-invariant solutions have been computed.
\end{abstract}
\textbf{Keywords:} Gardner equation, Approximate symmetry, Optimal system, Approximate invariant solutions. \\
\textbf{MSC 2010:} 76M60, 35B20, 35Q35.
\section{Introduction} 
The Kortewege-de Vries (KdV) equation is a mathematical model to describe weakly nonlinear long waves. There are many different variations of this PDE \cite{Sch}, But its canonical form is, 
\begin{equation} \label{eq:eq1}
u_{t}-6uu_{x}+u_{xxx}=0.
\end{equation}
The KdV equation  (\ref{eq:eq1}) owes its name to the famous paper of Diederik Kortewedg and Hendrik de Vries, But
much of the significant work on the KdV equation was initiated by the publication of several papers of Gardner et al. (1967-1974). The remarkable discovery of Gardner et al. (1967) that KdV equation was integrable through an inverse scatering transform marked the begining of soliton theory.

In 1968 Miura \cite{Miura1,Miura2} introduce \textit{``Miura transformation"},
\begin{equation} \label{eq:eq2}
u=v^2+v_{x} , 
\end{equation}
 to determinean an infinite number of conservation laws. If we put $v=1/2\epsilon+\epsilon w$ which $\epsilon$ is an arbitrary real parameter, then Miura transformation becomes $ u=1/4\epsilon^2+ w + \epsilon w_{x} + \epsilon^2 w^2$. But, since any arbitrary constant is a trivial solution of KdV equation, it may be removed by a Galilean transformation, so we just consider \textit{``Gardner transfosmation"}, means $u=w+\epsilon w_{x}+\epsilon^2 w^2$, substituting above transformation in KdV equation, observe that w satisfies in \textit{``Gardner equation"},
\begin{equation} \label{eq:eq3}
w_{t}-6(w+\epsilon^2w^2)w_{x}+w_{xxx}=0,
\end{equation}
for all $\epsilon$. Obviously, if $\epsilon=0$, Gardner equation becomes the KdV equation. Next, we observe that the 
for more detailed exposition and references refer to \cite{debnath,Miles}.

These two nonlinear equations are integrable with inverse scattering method. But in this paper, we analyse them with a method which is introduce by Baikov, Gazizov and Ibragimov \cite{Baikov1,Baikov2}.  This method which is known as \textit{``approximate symmetry"} is a combination of Lie group theory and perturbations. There is a second method which is also known as ``approximate symmetry" due to Fushchich
and Shtelen \cite{Fush} and later followed by Euler et al \cite{Euler1,Euler2}. For a comparison of these two methods, we refer the interested reader to the papers \cite{Pak,Wilst1}.
\section{Notations and Definitions}
Before continuing we need to present some definitions and theorems of the book of Ibragimov and Kovalev \cite{Ibra}.

If a function $f(x,\varepsilon)$ satisfies the condition $\lim_{\varepsilon\to 0}f(x,\varepsilon)/\varepsilon^p=0$, it is written $f(x, \varepsilon) = o(\varepsilon^{p})$ and $f$ is said to be of order less than $\varepsilon^{p}$.

If $f(x, \varepsilon) - g(x,\varepsilon) = o(\varepsilon^{p})$, the functions $f$ and $g$ are said to be approximately equal (with an error $o(\varepsilon^{p})$) and written as $f(x,\varepsilon) = g(x,\varepsilon) +o(\varepsilon^p)$, or, briefly $f\approx g$ when there is no ambiguity.
The approximate equality defines an equivalence relation, and we join functions into equivalence classes by letting $f(x,\varepsilon)$ and $g(x,\varepsilon)$ to be members of the same class if and only if $f\approx g$.

Given a function $f(x,\varepsilon)$, let
\begin{equation} \label{eq:eq4}
f_0(x)+\varepsilon f_1(x)+\cdots +\varepsilon^p f_p(x),
\end{equation}
be the approximating polynomial of degree p in $\varepsilon$ obtained via the Taylor series expansion of $f(x, \varepsilon)$ in powers of $\varepsilon$ about $\varepsilon=0$. Then any function $g\approx f$ (in particular, the function $f$ itself) has the form
\begin{eqnarray*}
g(x,\varepsilon) = f_0(x) +\varepsilon f_1(x) \cdots  +\varepsilon^p f_p(x) +o(\varepsilon^p).
\end{eqnarray*}
Consequently the expression (\ref{eq:eq4}) is called a \textit{canonical representative} of the equivalence class of functions containing
$f$· Thus, the equivalence class of functions $g(x,\varepsilon) \approx f(x,\varepsilon)$ is determined by the
ordered set of $p + 1$ functions $f_0(x), f_1(x), \cdots  , f_p(x)$. In the theory of approximate transformation groups, one considers ordered sets of smooth vector-functions depending on $x$'s and a group parameter $a$ as $(f_0(x,a)$, $f_1(x,a)$, $\cdots$, $f_p(x,a))$, with coordinates
$(f_0^{i}(x,a)$, $f_1^i(x,a)$, $\cdots$, $f_p^i(x,a))$, $i= 1, \cdots  ,n$. Let us define the one-parameter family $G$ of approximate transformations
\begin{equation} \label{eq:eq5}
\bar {x}^i\approx f_0^i(x,a)+\varepsilon f_1^i(x,a)+\cdots +\varepsilon^p f_p^i(x,a),\qquad i= 1, \cdots  ,n, 
\end{equation}
of points $x=(x^1,\cdots ,x^n)\in\mathbb{R}^n$ into points $\bar{x}=(\bar{x}^1,\cdots,\bar {x}^n)\in \mathbb{R}^n$ as the class of
invertible transformations
\begin{equation} \label{eq:eq6}
\bar {x} = f(x,a,\varepsilon),
\end{equation}
with vector-functions $f = (f^1,... ,f^n)$ such that $f^i(x,a,\varepsilon)\approx f_0^i(x,a) + \varepsilon f_1^i(x, a) +\cdots +\varepsilon^p f_p^i(x,a)$, $i=1,\cdots,n$. Here $a$ is a real parameter, and the following condition is imposed $f(x,0,\varepsilon)\approx x$.

The set of transformations (\ref{eq:eq5}) is called a \textit{one-parameter approximate transformation group} if $f(f(x,a,\varepsilon),b,\varepsilon) \approx f(x,a+b,\varepsilon)$ for all transformations (\ref{eq:eq6}). Unlike the classical Lie group theory, $f$ does not necessarily denote the same function at each occurrence. It can be replaced by any function $g\approx f$.

Let $G$ be a one-parameter approximate transformation group:
\begin{equation} \label{eq:eq7}
\bar z^i\approx f(z,a,\varepsilon) \equiv f_0^i(z,a) +\varepsilon f_1^i(z,a),\qquad i = 1, \cdots  ,N.
\end{equation}
An approximate equation 
\begin{equation} \label{eq:eq8}
F(z,\varepsilon) \equiv F_0(z)+\varepsilon F_1(z) \approx 0,
\end{equation}
is said to be \textit{approximately invariant with respect to $G$}, or admits $G$ if $F(\bar z,\varepsilon) \approx F(f(z,a,\varepsilon),\varepsilon)=o(\varepsilon)$, whenever $z = (z^1, \cdots  ,z^N)$ satisfies Eq.(\ref{eq:eq8}).  If $z = (x,u,u_{(1)},... ,u_{(k)})$, then (\ref{eq:eq8}) becomes an approximate differential equation of order $k$, and $G$ is an approximate symmetry group of the differential equation.
\begin{theorem} \label{T:thm1}
 Equation (\ref{eq:eq8}) is approximately invariant under the approximate transformation group (\ref{eq:eq7}) with the generator
\begin{equation} \label{eq:eq9}
 X=X_0+\varepsilon X_1\equiv \xi_0^i(z)\,\partial_{z^i}+\varepsilon \xi_1^i(z)\,\partial_{z^i},
\end{equation}
 if and only if $\big[X^{(k)}F(z,\varepsilon)\big]_{F\approx0} = o(\varepsilon)$,  or 
 \begin{equation} \label{eq:eq10}
  \Big[X_0^{(k)}F_0(z)+\varepsilon \Big(X_1^{(k)}F_0(z)+X_0^{(k)}F_1(z)\Big)\Big]_{(\ref{eq:eq8})} = o(\varepsilon).
\end{equation}
\end{theorem}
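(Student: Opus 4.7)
The plan is to reduce the statement to the classical infinitesimal criterion, but working modulo $o(\varepsilon)$ throughout. The starting point is the definition of approximate invariance already given in the excerpt: $F(\bar z,\varepsilon)=o(\varepsilon)$ whenever $z$ satisfies (\ref{eq:eq8}). I would treat the left-hand side as a function of the group parameter $a$ and Taylor-expand about $a=0$, writing $F(\bar z,\varepsilon)=F(z,\varepsilon)+a\,\frac{d}{da}F(\bar z,\varepsilon)\big|_{a=0}+O(a^2)$. Since $F(z,\varepsilon)\approx 0$ holds on the manifold and the invariance condition must hold for all sufficiently small $a$, the coefficient $\frac{d}{da}F(\bar z,\varepsilon)\big|_{a=0}$ must itself be $o(\varepsilon)$ on the manifold defined by (\ref{eq:eq8}).

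Next I would compute this coefficient via the chain rule. Using $f(x,0,\varepsilon)\approx x$ together with the definition of the generator in (\ref{eq:eq9}), one gets $\frac{d\bar z^i}{da}\big|_{a=0}=\xi_0^i(z)+\varepsilon\xi_1^i(z)+o(\varepsilon)$, so the chain rule produces exactly $X_0F+\varepsilon X_1F$ modulo $o(\varepsilon)$. For $z=(x,u,u_{(1)},\dots,u_{(k)})$ the derivative coordinates must be transported by the prolonged action, and since prolongation is linear in the components of $\xi$, the prolonged generator splits as $X^{(k)}=X_0^{(k)}+\varepsilon X_1^{(k)}$, applied termwise by the usual formulas. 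This yields the first form of the criterion, $\bigl[X^{(k)}F(z,\varepsilon)\bigr]_{F\approx 0}=o(\varepsilon)$.

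The equivalence with (\ref{eq:eq10}) then follows by pure algebra: expanding
\[
X^{(k)}F(z,\varepsilon)=\bigl(X_0^{(k)}+\varepsilon X_1^{(k)}\bigr)\bigl(F_0+\varepsilon F_1\bigr)=X_0^{(k)}F_0+\varepsilon\bigl(X_1^{(k)}F_0+X_0^{(k)}F_1\bigr)+\varepsilon^2 X_1^{(k)}F_1,
\]
and absorbing the last term into the $o(\varepsilon)$ error. For the converse direction one integrates the infinitesimal condition along the approximate flow, reproducing the classical Lie-theoretic argument but keeping only terms up to first order in $\varepsilon$, the higher-order terms being by definition hidden inside the equivalence class $\approx$.

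The main obstacle I anticipate is justifying the approximate manipulations rigorously at the point where one restricts to the solution set $F\approx 0$: one must check that Taylor expansion in $a$ and restriction to the approximate manifold commute modulo $o(\varepsilon)$, which requires keeping careful track of the order in which the limits $a\to 0$ and $\varepsilon\to 0$ are taken. The prolongation step itself is routine but notationally dense, and writing out $X_0^{(k)}$ and $X_1^{(k)}$ explicitly is what makes the criterion usable in the Gardner-equation computations that follow.
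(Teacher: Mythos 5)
The paper contains no proof of Theorem \ref{T:thm1}: it is imported verbatim from Ibragimov and Kovalev \cite{Ibra} (ultimately from Baikov, Gazizov and Ibragimov \cite{Baikov1,Baikov2}), so there is no in-text argument to compare yours against. Judged on its own terms, your forward direction is the standard one and is sound: writing $F(f(z,a,\varepsilon),\varepsilon)=G_0(z,a)+\varepsilon G_1(z,a)+o(\varepsilon)$, approximate invariance forces $G_0$ and $G_1$ to vanish on the solution manifold for every $a$, hence their $a$-derivatives at $a=0$ vanish there as well, and the chain rule together with $\bar z^i=z^i+a\bigl(\xi_0^i+\varepsilon\xi_1^i\bigr)+O(a^2)+o(\varepsilon)$ identifies those derivatives with $X_0^{(k)}F_0$ and $X_1^{(k)}F_0+X_0^{(k)}F_1$. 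The splitting $X^{(k)}=X_0^{(k)}+\varepsilon X_1^{(k)}$ modulo $o(\varepsilon)$ and the algebraic expansion that produces (\ref{eq:eq10}) are likewise correct.

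The gap is in the converse. ``Integrating the infinitesimal condition along the approximate flow'' does not go through as stated, because (\ref{eq:eq10}) is assumed only on the solution manifold of (\ref{eq:eq8}), whereas the Lie-series argument $\frac{d}{da}F(\bar z,\varepsilon)=\bigl(X^{(k)}F\bigr)(\bar z,\varepsilon)$ needs control of $X^{(k)}F$ at the transformed points $\bar z$, which are not known to lie on the manifold until invariance has been proved --- precisely the thing you are trying to establish. The standard repair, and the real content of Remark \ref{rem:rem1}, is a Hadamard-type factorization: from the determining equation one first extracts identities $X_0^{(k)}F_0=\lambda F_0$ and $X_1^{(k)}F_0+X_0^{(k)}F_1=\lambda F_1+\mu F_0$ valid \emph{off} the manifold (for suitable smooth $\lambda,\mu$, using that $F_0$ has maximal rank), after which the differential equation $\frac{d}{da}F(\bar z,\varepsilon)=\lambda(\bar z)F(\bar z,\varepsilon)+o(\varepsilon)$ closes up and integrates to give $F(\bar z,\varepsilon)=o(\varepsilon)$ whenever $F(z,\varepsilon)=o(\varepsilon)$. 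Without this step your converse is an assertion rather than an argument; with it, your outline becomes the standard proof.
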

 In which $k$ is order of equation , and $X^{(k)}$ is $k^{th}$ order prolongation of $X$. The operator (\ref{eq:eq9}) satisfying Eq. (\ref{eq:eq10}) is called an \textit{infinitesimal approximate symmetry} of, or an \textit{approximate operator admitted} by Eq. (\ref{eq:eq8}). Accordingly, Eq. (\ref{eq:eq10}) is termed the \textit{determining equation} for approximate symmetries.
\begin{remark}
\label{rem:rem1}
 The determining equation (\ref{eq:eq10}) can be written as follows:
\begin{equation} \label{eq:eq11}  
X_0^{(k)}F_0(z) = \lambda(z)F_0(z),
\end{equation}
\begin{equation} \label{eq:eq12}
X_1^{(k)} F_0(z) +X_0^{(k)}F_1 (z) = \lambda(z)F_1(z).
\end{equation}
The factor $\lambda(z)$ is determined by Eq. (\ref{eq:eq11}) and then substituted in Eq. (\ref{eq:eq12}). The 
latter equation must hold for all solutions of $F_0(z)=0$.
\end{remark} 
Comparing Eq. (\ref{eq:eq11}) with the determining equation of exact symmetries, we obtain the following statement.
\begin{theorem} \label{T:thm2}
If Eq. (\ref{eq:eq8}) admits an approximate tramformation group with the generator $X = X_0+\varepsilon X_1$, where $X_0\neq 0$, then the operator
\begin{equation} \label{eq:eq13}
X_0=\xi_0^i \,\partial_{z^i},
\end{equation}
is an exact symmetry of the equation
\begin{equation} \label{eq:eq14}
F_0(z)=0.
\end{equation}
\end{theorem}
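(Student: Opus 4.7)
The plan is to derive the conclusion directly from the decomposition of the determining equation supplied by Theorem~\ref{T:thm1} and Remark~\ref{rem:rem1}. By hypothesis, $X = X_0 + \varepsilon X_1$ is an approximate symmetry of Eq.~(\ref{eq:eq8}), so (\ref{eq:eq10}) holds on the approximate solution manifold of that equation.

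First I would note that the functions $F_0$ and $F_1$, as well as the coefficients $\xi_0^i$ and $\xi_1^i$ of the generator, are all independent of $\varepsilon$; the prolongation operation preserves this property. Consequently the bracket in (\ref{eq:eq10}) is a genuine polynomial of degree one in $\varepsilon$, and the requirement that it be $o(\varepsilon)$ whenever $F \approx 0$ forces each coefficient of its $\varepsilon$-expansion to vanish on the approximate solution manifold. This is exactly the separation carried out in Remark~\ref{rem:rem1}, which produces Eqs.~(\ref{eq:eq11}) and (\ref{eq:eq12}).

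Next I would isolate the $\varepsilon^{0}$ component (\ref{eq:eq11}),
\[
X_0^{(k)} F_0(z) = \lambda(z)\, F_0(z),
\]
and observe that this is precisely the classical Lie determining equation for Eq.~(\ref{eq:eq14}): it asserts that $X_0^{(k)} F_0$ vanishes on every solution of $F_0(z) = 0$. By the standard infinitesimal criterion for point symmetries of a differential equation, it follows that $X_0 = \xi_0^i(z)\,\partial/\partial z^i$ generates a one-parameter Lie group of exact point transformations leaving (\ref{eq:eq14}) invariant, which is the desired conclusion.

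The entire argument is therefore a direct comparison between (\ref{eq:eq11}) and the classical determining relation. There is no genuine computational obstacle; the only point requiring care is the legitimacy of splitting (\ref{eq:eq10}) by powers of $\varepsilon$, which is valid precisely because $F_0$, $F_1$, $\xi_0^i$, $\xi_1^i$ do not depend on $\varepsilon$. The hypothesis $X_0 \neq 0$ is invoked only to ensure that the exact symmetry extracted in this way is nontrivial; if one allowed $X_0 = 0$ the statement would become vacuous.
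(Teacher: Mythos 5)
Your argument is correct and follows exactly the route the paper itself takes: it isolates the $\varepsilon^{0}$ part (\ref{eq:eq11}) of the determining equation (\ref{eq:eq10}) and observes that it coincides with the classical Lie determining equation for $F_0(z)=0$, which is precisely the paper's one-line justification (``Comparing Eq.~(\ref{eq:eq11}) with the determining equation of exact symmetries\dots''). Your added remarks on the legitimacy of splitting by powers of $\varepsilon$ and on the role of $X_0\neq 0$ are sound elaborations of the same idea.
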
 
\begin{remark} \label{rem:rem2}
It is manifest from Eqs.(\ref{eq:eq11}), (\ref{eq:eq12}) that if $X_0$ is an exact symmetry of Eq.(\ref{eq:eq14}), then $X = \varepsilon X_0$ is an approximate symmetry of Eq. (\ref{eq:eq8}).
\end{remark} 
\begin{definition}
Eqs. (\ref{eq:eq14}) and (\ref{eq:eq8}) are termed an \textit{unperturbed equation} and a
\textit{perturbed equation}, respectively. Under the conditions of Theorem \ref{T:thm2}, the operator
$X_0$ is called a \textit{stable symmetry} of the unperturbed equation (\ref{eq:eq14}). The corresponding
approximate symmetry generator $X = X_0 +\varepsilon X_1$ for the perturbed equation
(\ref{eq:eq13}) is called a \textit{deformation of the infinitesimal symmetry} $X_0$ of Eq. (\ref{eq:eq14}) caused
by the perturbation $\varepsilon F_1(z)$. In particular, if the most general symmetry Lie algebra
of Eq. (\ref{eq:eq14}) is stable, we say that the perturbed equation (\ref{eq:eq8}) \textit{inherits the
symmetries of the unperturbed equation}.
\end{definition}
\section{Approximate Symmetry of Gardner Equation} 
Referring  to Gardner equation (\ref{eq:eq3}), if we put $\varepsilon=\epsilon^2 $ for small real parameter $\epsilon$, it becomes
\begin{equation}
\label{eq:eq15}
w_{t}-6(w+\varepsilon w^2)w_{x}+w_{xxx}=0.
\end{equation}
Now, we can use Remark \ref{rem:rem1} and Theorem \ref{T:thm2}  to provide an infinitesimal method for calculating
approximate symmetries  (\ref{eq:eq9}) for above differential equations with a small parameter.
\subsection{Exact Symmetries} 
Let us consider the approximate group generators in the form
\begin{equation}
\label{eq:eq16}
X =X_0+\varepsilon X_1 = (\alpha_0+\varepsilon \alpha_1)\,\partial_{x}+(\beta_0+\varepsilon \beta_1)\,\partial_{t} +(\eta_0+\varepsilon \eta_1)\,\partial_{w},  
\end{equation}
where $\alpha_i,\beta_i$ and $\eta_i$ for $i = 0, 1$ are unknown functions of $x,t$ and $w$.

Solving the determining equation 
\begin{equation}
\label{eq:eq17}
X_0^{(k)}F_0(z)\Big|_{F_0(z)=0}=0,
\end{equation}
 for the exact symmetries $X_0$ of the unperturbed  Gardner equation, means KdV equation we obtain
\begin{equation}
\label{eq:eq18}
\alpha_0=C_1-6 C_3 t+C_4 x,\qquad \beta_0=C_2+3C_4 t,\qquad \eta_0=C_3-2 C_4 w,
\end{equation}
where $C_1 , \cdots  ,C_4$ are arbitrary constants. Hence,
\begin{equation}
\label{eq:eq19}
X_0=(C_1-6 C_3 t+C_4 x)\,\partial_{x} +(C_2+3C_4 t)\,\partial_{t} +(C_3-2 C_4 w)\,\partial_{w}.
\end{equation}
Therefore, unperturbed Gardner equation, means KdV equation, admits the four-dimensional Lie algebra with the basis
\begin{equation}
\label{eq:eq20}
X_0^1=\,\partial_{x},\quad X_0^2=\,\partial_{t},\quad X_0^3=6t\,\partial_{x}-\,\partial_{w},\quad X_0^4=x\,\partial_{x}+3t\,\partial_{t}-2w\,\partial_{w}.
\end{equation}
\subsection{Approximate Symmetries}
First we need to determine  the auxiliary function $H$ by virtue of Eqs.(\ref{eq:eq11}), (\ref{eq:eq12}) and (\ref{eq:eq8}), i.e., by the equation
\begin{eqnarray*}
 H = \frac{1}{\varepsilon} \big[X_0^{(k)}(F_0(z) +\varepsilon F_1(z))\big|_{F_0(z)+\varepsilon F_1(z)=o}\big].
 \end{eqnarray*}
Substituting the expression (\ref{eq:eq19}) of the generator $X_0$ into above equation
we obtain the auxiliary function
\begin{equation} 
\label{eq:eq21}
H=12 w w_{x} (C_4 w-C_3).
\end{equation} 
Now, calculate the operators $X_1$ by solving the inhomogeneous determining equation
for deformations as $ X_1^{(k)}F_0(z)\big|_{F_0(z)}+H = 0$. This determining equation  for this equation is written as
\begin{eqnarray*}
X_1^{(3)}(w_t-6ww_x+w_{xxx}) +12ww_x(C_4w-C_3) =0.
\end{eqnarray*}
Solving this determining equation yields that $C_4=0$, and hence,
\begin{eqnarray*}
\alpha_1=C_5+C_8x-6C_7t,\qquad\beta_1=C_6 +3C_8 t,\qquad\eta_1=C_7-2(C_3+C_8)w.
\end{eqnarray*}
Then, we obtain the following
approximate symmetries of the Gardner equation:
\begin{eqnarray}
&\mathbf{v}_1=\,\partial_{x},\quad \mathbf{v}_2=\,\partial_{t},\quad
\mathbf{v}_3=6t\,\partial_{x}+(2\varepsilon w-1)\,\partial_{w},\quad\mathbf{v}_4=\varepsilon \mathbf{v}_1, \label{eq:eq22}\\
&\mathbf{v}_5=\varepsilon \mathbf{v}_2,\quad \mathbf{v}_6=\varepsilon(6t\,\partial_{x}-\,\partial_{w})=\varepsilon \mathbf{v}_3,\qquad
\mathbf{v}_7=\varepsilon(x\,\partial_{x}+3t\,\partial_{t}-2w\,\partial_{w}).\nonumber
\end{eqnarray}

Because of $C_4=0$, the scaling operator $X_0^4=x\,\partial_{x}+3t\,\partial_{t}-2w\,\partial_{w}$, is not stable. 
Hence, \textit{the Gardner equation does not inherit the symmetries of the KdV equation}.

In the first-order of precision, We have the following Commutator table,shows that the operators (\ref{eq:eq22}) span an seven-dimensional approximate Lie algebra, and hence generate an seven-parameter approximate transformations group.
\begin{table}[H] 
\centering
\caption{Approximate commutators of Approximate symmetry of Gardner equation} 
\begin{tabular}{l|c*{6}{c}r}
 $[\,,\,]$ & $\mathbf{v}_{1}$ & $\mathbf{v}_{2}$ & $\mathbf{v}_{3}$ & $\mathbf{v}_{4}$ & $\mathbf{v}_{5}$  & $\mathbf{v}_{6}$ & $\mathbf{v}_{7}$ \\
\hline
$\mathbf{v}_{1}$ & 0 & 0 & 0 & 0 & 0 & 0 & $\mathbf{v}_{4}$\\
$\mathbf{v}_{2}$ & 0 & 0 & $6\mathbf{v}_{1}$ & 0 &  0 & $6\mathbf{v}_{4}$ &  $3\mathbf{v}_{5}$  \\
$\mathbf{v}_{3}$ & 0 & $-6\mathbf{v}_{1}$ & 0 & 0 &  $-6\mathbf{v}_{4}$ & 0 &  $-2\mathbf{v}_{6}$  \\
$\mathbf{v}_{4}$ & 0 & 0 & 0 & 0 &  0 & 0 &  0  \\
$\mathbf{v}_{5}$ & 0 & 0 & $6\mathbf{v}_{4}$ & 0 &  0 & 0 &  0  \\
$\mathbf{v}_{6}$ & 0 & $-6\mathbf{v}_{4}$ & 0 & 0 &  0 & 0 &  0  \\
$\mathbf{v}_{7}$ & $-\mathbf{v}_{4}$ & $-3\mathbf{v}_{5}$ & $2\mathbf{v}_{6}$ & 0 &  0 & 0 &  0  \\
\end{tabular}
\end{table}
It is worth noting that the seven-dimensional approximate Lie algebra $\mathfrak{g}$ is solvable and its finite sequence of ideals is as follows:
\begin{eqnarray*}
0\subset\big\langle \mathbf{v}_4\big\rangle \subset\big\langle \mathbf{v}_4,\mathbf{v}_5\big\rangle \subset\big\langle \mathbf{v}_4,\mathbf{v}_5,\mathbf{v}_6\big\rangle \subset\big\langle \mathbf{v}_4,\mathbf{v}_5,\mathbf{v}_6,\mathbf{v}_7\big\rangle \\
\subset\big\langle \mathbf{v}_1,\mathbf{v}_4,\mathbf{v}_5,\mathbf{v}_6,\mathbf{v}_7\big\rangle \subset\big\langle \mathbf{v}_1,\mathbf{v}_2,\mathbf{v}_4,\mathbf{v}_5,\mathbf{v}_6,\mathbf{v}_7\big\rangle \subset\mathfrak{g}.
\end{eqnarray*}
\section{Optimal System for Gardner Equation}
In general, to each $s-$parameter subgroup $H$ of the full symmetry group $G$ of a system of differential equations in $p > s$ independent variables, there will correspond a family of group-invariant solutions. Since there are almost always an infinite number of such subgroups, it is not usually feasible to list all possible group-invariant solutions to the system. We need an effective, systematic means of classifying these solutions, leading to an ``optimal system” of group-invariant solutions from which every other such solution can be derived.
\begin{definition}
Let $G$ be a Lie group with Lie algebra $\mathfrak g$. An optimal system of $s−$parameter subgroups is a list of conjugacy inequivalent $s$−parameter subalgebras with the property that any other subgroup is conjugate to precisely one subgroup in the list. Similarly, a list of $s−$parameter subalgebras forms an optimal system if every $s−$parameter subalgebra of $\mathfrak g$ is equivalent to a unique member of the list under some element of the adjoint representation: $\tilde{\mathfrak{h}} = \textrm{Ad}(g.\mathfrak{h})$, $g \in G$. \cite{Olver1}
\end{definition}
\begin{theorem}
\label{T:thm3}
 Let $H$ and $\tilde{H}$ be connected $s$-dimensional Lie subgroups of the Lie group $G$ with corresponding Lie
subalgebras $\mathfrak{h}$ and $\tilde{\mathfrak{h}}$ of the Lie algebra $g$ of $G$. Then $\tilde{H}=gHg^{-1}$ are conjugate subgroups if and only if $\tilde{\mathfrak{h}} = \textrm{Ad}(g.\mathfrak{h})$ are conjugate subalgebras.(Proposition 3.7 of \cite{Olver1})
\end{theorem}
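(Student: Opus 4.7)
The plan is to exploit the fundamental intertwining identity between conjugation and the exponential map, namely $g\exp(X)g^{-1}=\exp(Ad(g)X)$ for all $X\in\mathfrak{g}$ and $g\in G$, together with the Lie correspondence that a connected Lie subgroup of $G$ is uniquely determined by its Lie subalgebra of $\mathfrak{g}$ and is generated by exponentials of that subalgebra. The statement is precisely Proposition 3.7 of \cite{Olver1}, so the role of this proof is to record the standard two-direction argument.

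First, I would establish the "only if" direction. Assume $\tilde{H}=gHg^{-1}$. For any $X\in\mathfrak{h}$ the one-parameter curve $t\mapsto\exp(tX)$ lies in $H$, so
\[
g\exp(tX)g^{-1}=\exp\bigl(t\,Ad(g)X\bigr)\in gHg^{-1}=\tilde{H}
\]
for all $t\in\mathbb{R}$. Differentiating at $t=0$ yields $Ad(g)X\in\tilde{\mathfrak{h}}$, hence $Ad(g)(\mathfrak{h})\subseteq\tilde{\mathfrak{h}}$. Because $Ad(g)$ is a linear isomorphism of $\mathfrak{g}$ (with inverse $Ad(g^{-1})$) and $\dim\mathfrak{h}=\dim\tilde{\mathfrak{h}}=s$, the inclusion is in fact an equality.

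Next I would handle the "if" direction. Assume $\tilde{\mathfrak{h}}=Ad(g)(\mathfrak{h})$, and set $K:=gHg^{-1}$. Since inner conjugation by $g$ is a diffeomorphism of $G$ fixing the identity, $K$ is a connected Lie subgroup of $G$ of dimension $s$, and the derivative at $e$ of conjugation by $g$ is exactly $Ad(g)$, so the Lie algebra of $K$ equals $Ad(g)(\mathfrak{h})=\tilde{\mathfrak{h}}$. Uniqueness in the Lie subgroup correspondence then forces $K=\tilde{H}$. If one wishes to avoid quoting this correspondence, the inclusion $K\subseteq\tilde{H}$ can be obtained explicitly: connectedness of $H$ writes each $h\in H$ as $h=\exp(X_{1})\cdots\exp(X_{k})$ with $X_{j}\in\mathfrak{h}$, whence
\[
ghg^{-1}=\exp\bigl(Ad(g)X_{1}\bigr)\cdots\exp\bigl(Ad(g)X_{k}\bigr)\in\tilde{H}
\]
because each $Ad(g)X_{j}\in\tilde{\mathfrak{h}}$; applying the same argument with $g^{-1}$ and $\tilde{H}$ delivers the reverse inclusion.

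The only delicate step is the "if" direction, where one must justify passing from an identity of Lie subalgebras to an identity of connected Lie subgroups. This is the one place where the connectedness hypothesis on $H$ and $\tilde{H}$ is used in an essential way, since without it the subgroups could differ in components disconnected from the identity; the rest of the argument is a direct computation with the exponential map.
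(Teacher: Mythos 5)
Your proof is correct: both directions follow from the identity $g\exp(X)g^{-1}=\exp(Ad(g)X)$ together with the uniqueness of the connected Lie subgroup attached to a given subalgebra, and you use the connectedness hypothesis exactly where it is needed. The paper itself offers no proof, simply citing Proposition 3.7 of Olver's book, and your argument is the standard one given there, so there is nothing to compare beyond noting that you have supplied the details the paper omits.
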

By theorem (\ref{T:thm3}), the problem of finding an optimal system of subgroups is equivalent to that of finding an optimal
system of subalgebras. To compute the adjoint representation, we use the Lie series
\begin{eqnarray}
\textrm{Ad}(\exp(\mu\mathbf{v}_i))\mathbf{v}_j&=&\mathbf{v}_j-\mu[\mathbf{v}_i,\mathbf{v}_j ] +\frac{\mu^2}{2}[\mathbf{v}_i,[\mathbf{v}_i,\mathbf{v}_j]]+\cdots\nonumber \\
&=&\exp(\textrm{ad}(-\mu\mathbf{v}_i))\mathbf{v}_j,
\end{eqnarray}
where $[\mathbf{v}_i,\mathbf{v}_j]$ is the commutator for the Lie algebra, $\mu$ is a parameter, and $i, j = 1,\ldots,7$. 
In this manner, we construct the table with the $(i,j)$-th entry indicating $F_i^\mu(\mathbf{v}_j):=\textrm{Ad}(\exp(\mu\mathbf{v}_i)\mathbf{v}_j)$. 
\begin{table}\small
\centering
\caption{Adjoint representation  of approximate symmetry of Gardner equation}
 \begin{tabular}{l|c*{6}{c}r}
& $\mathbf{v}_{1}$ & $\mathbf{v}_{2}$ & $\mathbf{v}_{3}$ & $\mathbf{v}_{4}$ & $\mathbf{v}_{5}$  & $\mathbf{v}_{6}$ & $\mathbf{v}_{7}$ \\
\hline
$\mathbf{v}_{1}$ & $\mathbf{v}_{1}$ & $\mathbf{v}_{2}$ & $\mathbf{v}_{3}$ & $\mathbf{v}_{4}$ & $\mathbf{v}_{5}$ & $\mathbf{v}_{6}$ & $\mathbf{v}_{7}-\mu\mathbf{v}_{4}$  \\
$\mathbf{v}_{2}$ & $\mathbf{v}_{1}$ & $\mathbf{v}_{2}$ & $\mathbf{v}_{3}-6\mu \mathbf{v}_{1}$ & $\mathbf{v}_{4}$ & $\mathbf{v}_{5}$ & $\!\!\!\mathbf{v}_{6}-6\mu \mathbf{v}_{4}\!\!\!$ & $\mathbf{v}_{7}-3\mu\mathbf{v}_{5}$ \\
$\mathbf{v}_{3}$ & $\mathbf{v}_{1}$ & $\mathbf{v}_{2}+6\mu \mathbf{v}_{1}$ & $\mathbf{v}_{3}$ & $\mathbf{v}_{4}$ & $\!\!\mathbf{v}_{5}+6\mu \mathbf{v}_{4}\!\!$ & $\mathbf{v}_{6}$ & $\mathbf{v}_{7}+2\mu\mathbf{v}_{6}$ \\
$\mathbf{v}_{4}$ & $\mathbf{v}_{1}$ & $\mathbf{v}_{2}$ & $\mathbf{v}_{3}$ & $\mathbf{v}_{4}$ & $\mathbf{v}_{5}$ & $\mathbf{v}_{6}$ & $\mathbf{v}_{7}$   \\
$\mathbf{v}_{5}$ & $\mathbf{v}_{1}$ & $\mathbf{v}_{2}$ & $\mathbf{v}_{3}-6\mu \mathbf{v}_{4}$ & $\mathbf{v}_{4}$ & $\mathbf{v}_{5}$ & $\mathbf{v}_{6}$ & $\mathbf{v}_{7}$   \\
$\mathbf{v}_{6}$ & $\mathbf{v}_{1}$ & $\mathbf{v}_{2}+6\mu \mathbf{v}_{4}$ & $\mathbf{v}_{3}$ & $\mathbf{v}_{4}$ & $\mathbf{v}_{5}$ & $\mathbf{v}_{6}$ & $\mathbf{v}_{7}$   \\
$\mathbf{v}_{7}$ & $\!\!\mathbf{v}_{1}+\mu \mathbf{v}_{4}\!\!$ & $\mathbf{v}_{2}+3\mu \mathbf{v}_{5}$ & $\mathbf{v}_{3}-2\mu \mathbf{v}_{6}$ & $\mathbf{v}_{4}$ & $\mathbf{v}_{5}$ & $\mathbf{v}_{6}$ & $\mathbf{v}_{7}$  \\
\end{tabular}
\end{table}
\begin{theorem}
\label{T:thm4}
An optimal system of one-dimensional approximate Lie algebras of the Gardner equation is provided by $\mathbf{v}_1$, \ 
$\mathbf{v}_4$, \ $\mathbf{v}_2+\alpha\mathbf{v}_6$, \ $\mathbf{v}_5+\alpha\mathbf{v}_1$, \ 
$\mathbf{v}_3+\alpha \mathbf{v}_2+\beta\mathbf{v}_5$, \ $\mathbf{v}_6+\alpha\mathbf{v}_1+\beta\mathbf{v}_5$, \ 
$\mathbf{v}_7+\alpha\mathbf{v}_1+\beta\mathbf{v}_2+\gamma\mathbf{v}_3$.
\end{theorem}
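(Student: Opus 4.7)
The plan is to carry out Olver's standard reduction: write a general element of the seven-dimensional approximate Lie algebra $\mathfrak{g}$ as $\mathbf{v} = \sum_{i=1}^{7} a_i \mathbf{v}_i$ and apply a carefully ordered sequence of adjoint transformations from Table 2 until $\mathbf{v}$ is brought into one of the seven canonical forms listed. By Theorem \ref{T:thm3} this classifies the one-dimensional subalgebras up to conjugacy.

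The first thing I would do is scan Table 2 for structural information. A direct inspection shows that the coefficients $a_2$, $a_3$, and $a_7$ are never altered by any $\mathrm{Ad}(\exp(\mu \mathbf{v}_i))$; they only drive shifts of the remaining coefficients. These three \emph{adjoint-invariant} coefficients therefore dictate a branching on seven exhaustive cases, organized by the first nonzero entry in the ordered list $(a_7, a_3, a_2, a_6, a_5, a_1, a_4)$.

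The generic case is $a_7 \neq 0$. After rescaling so $a_7 = 1$, I would apply, in sequence, $\mathrm{Ad}(\exp(\mu_1 \mathbf{v}_3))$ to kill $a_6$, then $\mathrm{Ad}(\exp(\mu_2 \mathbf{v}_2))$ to kill $a_5$, then $\mathrm{Ad}(\exp(\mu_3 \mathbf{v}_1))$ to kill $a_4$; in each case the equation for $\mu_j$ has coefficient $a_7 = 1$ and is trivially solvable. The side-effects on $a_1$ are harmless and are simply absorbed into the final free parameter $\alpha$, producing $\mathbf{v}_7 + \alpha \mathbf{v}_1 + \beta \mathbf{v}_2 + \gamma \mathbf{v}_3$. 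The remaining cases are handled in exactly the same spirit: when $a_7 = 0$ but $a_3 \neq 0$, use $\mathrm{Ad}(\exp(\mu \mathbf{v}_7))$, $\mathrm{Ad}(\exp(\mu \mathbf{v}_2))$, $\mathrm{Ad}(\exp(\mu \mathbf{v}_5))$ to kill $a_6$, $a_1$, $a_4$; when the only nonzero invariant is $a_2$, use $\mathrm{Ad}(\exp(\mu \mathbf{v}_3))$, $\mathrm{Ad}(\exp(\mu \mathbf{v}_7))$, $\mathrm{Ad}(\exp(\mu \mathbf{v}_6))$ to kill $a_1$, $a_5$, $a_4$; and descend similarly through the cases where only $a_6$, only $a_5$, only $a_1$, or only $a_4$ remains, ending with the degenerate form $\mathbf{v}_4$ (which is fixed by every adjoint since $\mathbf{v}_4$ is central). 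To conclude the reduction is complete I would finally verify inequivalence: the three numbers $(a_2, a_3, a_7)$ are genuine adjoint invariants, so two canonical forms with distinct zero-patterns in these positions cannot possibly be conjugate.

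The main obstacle is combinatorial rather than conceptual: each adjoint has cascading effects on several coefficients at once, and one must choose the \emph{order} of operations so that a coefficient already zeroed is not repopulated by a later step. The bookkeeping is controlled by always exploiting the adjoint-invariant coefficient that defines the current case as the "pivot" -- the $\mu$ needed to kill a given coefficient is then of the form (current coefficient)/(invariant), never zero by assumption -- and by discharging $\mathrm{Ad}(\exp(\mu \mathbf{v}_7))$-style steps early so that its coupled shifts do not undo later work. Once this ordering is fixed in each case, the verification reduces to the routine linear-algebra check I have sketched above.
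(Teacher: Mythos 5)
Your proposal is correct and follows essentially the same route as the paper: a case split on the first nonzero coefficient in the order $(a_7,a_3,a_2,a_6,a_5,a_1,a_4)$, followed by adjoint maps read off Table~2 that annihilate the removable coefficients, with the surviving ones absorbed into the free parameters $\alpha,\beta,\gamma$. Your explicit observation that $a_2$, $a_3$, $a_7$ are unchanged by every adjoint map is a small but genuine improvement, since it justifies the case ordering and gives a partial inequivalence check that the paper omits entirely (though note it does not by itself separate the classes $\mathbf{v}_1$, $\mathbf{v}_4$, $\mathbf{v}_5+\alpha\mathbf{v}_1$, $\mathbf{v}_6+\alpha\mathbf{v}_1+\beta\mathbf{v}_5$, which all have $a_2=a_3=a_7=0$).
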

\begin{proof}
Consider the approximate symmetry algebra $\mathfrak{g}$ of the Gardner equation, whose adjoint representation was determined in the table. 
our task is to simplify as many of the coefficients $a_i$ as possible through judicious applications of adjoint maps to $\mathbf{w}_i$. So that $\mathbf{w}_i$ is equivalent to $\mathbf{w}'_i$ under the adjoint representation. 
 
Given a non-zero vector $\mathbf{w}_1=a_1\mathbf{v}_1 +a_2\mathbf{v}_2\cdots +a_7\mathbf{v}_7$. First suppose that $a_7\neq 0$. Scaling $\mathbf{w}_1$ if necessary, we can assume that $a_7=1$. As for the 7th column of the table, we have:
\begin{eqnarray*}
\mathbf{w}'_1&=&F_1^{a_4-3a_6a_5}\circ F_2^{a_5/3}\circ F_3^{-a_6/2}(\mathbf{w}_1)\\
&=&(a_1-3a_2a_6-2a_3a_5)\mathbf{v}_1+a_2\mathbf{v}_2+a_3\mathbf{v}_3+\mathbf{v}_7.
\end{eqnarray*}
The remaining approximate one-dimensional subalgebras are spanned by vectors of the 
above form with  $a_7=0$. If $a_3\neq 0$, we have $\mathbf{w}_2=a_1\mathbf{v}_1+a_2\mathbf{v}_2+\mathbf{v}_3+
a_4\mathbf{v}_4+a_5\mathbf{v}_5+a_6\mathbf{v}_6$. Next we act on $\mathbf{w}_2$ to cancel the coefficients of $\mathbf{v}_1,\mathbf{v}_4,\mathbf{v}_6$ as follows:
\begin{eqnarray*}
\mathbf{w}'_2=F_2^{a_1/6}\circ F_5^{(a_1a_6+2a_4)/12}\circ F_7^{a_6/2}(\mathbf{w}_2) 
=a_2\mathbf{v}_2+\mathbf{v}_3+\frac{3a_2a_6+2a_5}{2}\mathbf{v}_5.
\end{eqnarray*}
if $a_3,a_7=0$ and $a_2\neq 0$, the non-zero vector $\mathbf{w}_3=a_1\mathbf{v}_1+\mathbf{v}_2+a_4\mathbf{v}_4+a_5\mathbf{v}_5+a_6\mathbf{v}_6$, is equivalent to:
\begin{eqnarray*}
\mathbf{w}'_3=F_3^{-a_1/6}\circ F_6^{(a_1a_5-3a_4)/18}\circ F_7^{-a_5/3}(\mathbf{w}_3)=\mathbf{v}_2+a_6\mathbf{v}_6.
\end{eqnarray*}
if $a_2,a_3,a_7=0$ and $a_6\neq 0$, we scale to make $a_6=1$. Then $\mathbf{w}_4=a_1\mathbf{v}_1+a_4\mathbf{v}_4+a_5\mathbf{v}_5+\mathbf{v}_6$, is 
equivalent to $\mathbf{w}'_4$ under the adjoint representation: $\mathbf{w}'_4=F_2^{a_4/6}(\mathbf{w}_4)=a_1\mathbf{v}_1+a_5\mathbf{v}_5+\mathbf{v}_6$. if $a_2,a_3,a_6,a_7=0$ and $a_5\neq 0$, In the same way as before, the non-zero vector $\mathbf{w}_5=a_1\mathbf{v}_1+a_4\mathbf{v}_4+\mathbf{v}_5$, can be simplified: $\mathbf{w}'_5=F_3^{-a_4/6}(\mathbf{w}_5)=a_1\mathbf{v}_1+\mathbf{v}_5$. if $a_2,a_3,a_5,a_6,a_7=0$ and $a_1\neq 0$, we act on $\mathbf{w}_6=\mathbf{v}_1+a_4\mathbf{v}_4$, by $F_7^{-a_4}$,
 to cancel the coefficient of $\mathbf{w}_4$, leading to $\mathbf{w}'_6=F_6^{-a_4}(\mathbf{w}_6)=\mathbf{v}_1$.
 
The last remaining case occurs when  $a_1,\cdots,a_7=0$ exept $a_4\neq 0$, for which our  earlier simplifications were unnecessary. Because the only remaining vectors are  the multiples of $\mathbf{v}_4$, on which the adjoint representation acts trivially. 
\end{proof}
\section{Approximately differential invariants for the Gardner equation}
In this section we use two different methods to compute an approximately invariant solutions.

In the beginning of this section we compute an approximately invariant solution based on the $X=\mathbf{v}_1+\mathbf{v}_6$.
The approximate invariants for $X$ are determined by the equation: $X(J)=(\,\partial_{x} + \varepsilon(6t\,\partial_{x}-\,\partial_{w}))(J_0+\varepsilon J_1)=o(\varepsilon)$, or equivalently $\partial_{x}(J_0)=0$ and $\partial_{x}(J_1)+(6t\,\partial_{x}-\,\partial_{w})(J_0)=0$. The first equation has two functionally independent solutions $J_0=t $ and  $J_0=w$.

The simplest solutions of the second equation are respectively, $J_1=0$ and $J_1=x$. Therefore  we have two independent invariants $w+\varepsilon x=$ and $\varphi(t)$ respect to $X=\mathbf{v}_1+\mathbf{v}_6$.

Letting $w+\varepsilon x=\varphi(t)$  we obtain the $w=\varphi(t)-\varepsilon x$ for the  approximately invariant solutions. 

 Then, upon substituting  in Gardner equation transform into the equation $\varphi'+6\varepsilon(\varphi-\varepsilon x+\varepsilon(\varphi-\varepsilon x)^2=o(\varepsilon)$. Therefore,  in our approximation we have $\varphi=\varphi'=0$. Thus,  invariant solution to Gardner equation Corresponding to $X$ is $w=-\varepsilon x$. In this manner, we compute functionally approximate invariants respect to the generators of lie algebra and optimal system, as shown in Table 3 below.

Where the unknown functions $F, G, H$ are considered as follows:
\begin{eqnarray*}
F(x,t,w)&=&xw\Big( -1-2\gamma+\frac{3\alpha}{6\gamma t+\alpha}\Big)+x^2\Big(\frac{3\alpha\gamma}{2(6\gamma t+\alpha)^2}-\frac{\gamma^2+\gamma}{6\gamma t+\alpha}\Big),\\
G(x,t,w)&=&-\frac{8\gamma t^3+2\alpha t^2-\beta tx}{\beta},\\
H(x,t,w)&=&-\frac{\gamma(1+2\gamma)t^2}{2\beta}+2(1-\gamma)tw.
\end{eqnarray*}
\begin{table}[H] 
\centering
\caption{Approximate Reduced equations corresponding to infinitesimal symmetries}
 \begin{tabular}{cc*{1}{c}}
 Operator & Approximate Invariants & If \\
\hline
$\mathbf{v}_{1}$ & $t,w$ &
\\ \hline 
$\mathbf{v}_2$ & $x,w$ &
\\ \hline 
$\mathbf{v}_{3}$ & $t,x+6tw-2\varepsilon xw+x^2/6t$ &
\\ \hline 
$\mathbf{v}_4$ & $t,w $  &
\\ \hline 
$\mathbf{v}_{5}$ & $x,w$  &
\\ \hline 
$\mathbf{v}_6$ & $t,x+6tw$ & 
\\ \hline 
$\mathbf{v}_{7}$ & $t/x^3,x^2w$ &
\\ \hline 
$\mathbf{v}_2+\alpha\mathbf{v}_6$ & $x-3\alpha t^2\varepsilon,w+\alpha\varepsilon t$ &
\\ \hline 
$\mathbf{v}_5+\alpha\mathbf{v}_1$ & $\alpha t-\varepsilon$ & $\alpha\neq0$
\\ \hline 
//                                                       & $x,w$ & $\alpha=0$ 
\\ \hline 
$\mathbf{v}_3+\alpha \mathbf{v}_2+\beta\mathbf{v}_5$ & $ \displaystyle 3t^2-\alpha x-\varepsilon\beta x, \atop  \displaystyle t+\alpha w-\varepsilon(x/3-2tw+\beta w)$  
\\ \hline 
$\mathbf{v}_6+\alpha\mathbf{v}_1+\beta\mathbf{v}_5$ & $\alpha t-\varepsilon\beta x,\alpha w+\varepsilon x$ & $\alpha\neq0$
\\ \hline 
//                                                                                      & $3t^2-\beta x, t+\beta w$ & $\alpha=0$
\\ \hline 
$\mathbf{v}_7+\alpha\mathbf{v}_1+\beta\mathbf{v}_2+\gamma\mathbf{v}_3\!\!\!\!\!\!\!\!\!$ & $t/x^3,x^2w$ & $\alpha,\beta,\gamma=0$
\\ \hline 
// & $ \displaystyle t-3\varepsilon tx/(\alpha+6\gamma t),\atop  \displaystyle \gamma(x+6tw)+\alpha w+\varepsilon F(x,t,w)$ & $\beta=0 , \alpha^2+\gamma^2\neq0$
\\ \hline 
// & $ \displaystyle \alpha t-\beta x+3\gamma t^2+\varepsilon G(x,t,w),\atop \displaystyle \beta w+\gamma t +\varepsilon H(x,t,w)$ & $\beta\neq0$
\end{tabular}
\end{table}
Unfortunately, for this method the first-order approximate generator does not necessarily yield a first-order approximate solution. The reason is that the dependent variables are not expanded in a perturbation series \cite{Pak}. Therefore we use another technique to find approximate invariant solutions for the Gardner equation.
\subsection{Approximate Galilean-invariant solution}
 Now, we apply a different technique to find  \textit{Approximate Galilean-invariant solutions} for the Gardner equation. We know that the general form of Galilean-invariant solutions to the unperturbed Gardner equation, means KdV equation, look as $W:=(c-x)/(6t)$. The function $W$ is invariant under the operator $6t\,\partial_{x}-\,\partial_{w}$. 
\begin{figure}
  \centering
  \subfloat[Galilean-invariant solution]{\label{fig:Image1}\includegraphics[width=0.4\textwidth]{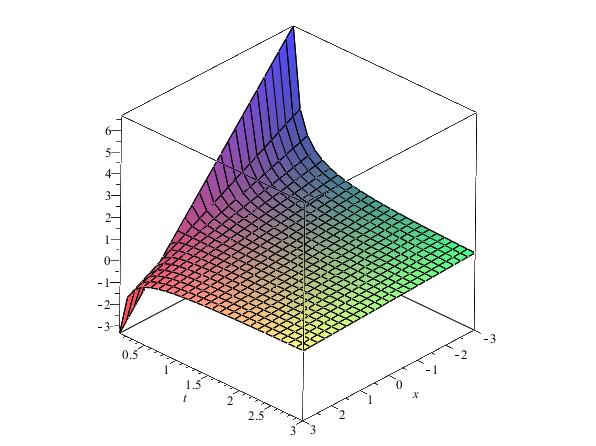}}                
  \subfloat[Approximate Galilean-invariant solution]{\label{fig:Image2}\includegraphics[width=0.35\textwidth]{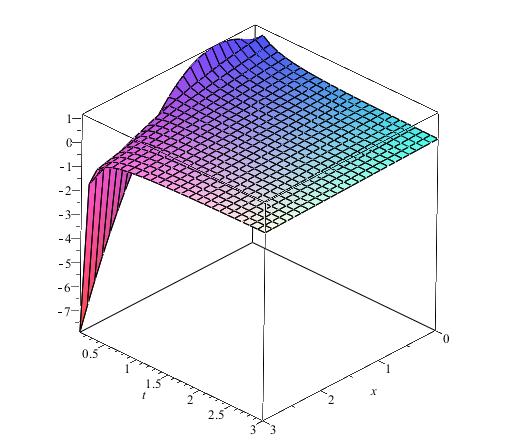}}
  \caption{Comparison of Galilean-invariant and approximate Galilean-invariant solution}
  \label{fig:Galilean}
\end{figure}
We consider the approximate symmetry $X_{\varepsilon} =\mathbf{v}_3+k_1 \mathbf{v}_4+k_2 \mathbf{v}_5+k_3 \mathbf{v}_6+k_4 \mathbf{v}_7$ of the perturbed Gardner equation; That is the approximate symmetry
\begin{eqnarray*}
X_{\varepsilon}&=&6t \,\partial_{x}-\,\partial_{w}+\varepsilon\Big(k_1 \,\partial_{x}+k_2 \,\partial_{t}+
k_3(6t\,\partial_{x}-\,\partial_{w})\\
&&\qquad\qquad\qquad+k_4 (x\,\partial_{x}+3t\,\partial_{t}-2w\,\partial_{w})\Big)=X_0+\varepsilon X_1,
\end{eqnarray*} 
and use it for finding an approximately invariant solution by looking for the invariant perturbation of the: 
\begin{equation} \label{eq:eq23}
w=W+\varepsilon v(x,t).
\end{equation}
The invariant equation for above equation is:
\begin{equation} \label{eq:eq24}
X_\varepsilon \Big(w-W-\varepsilon v(x,t)\Big)\Big|_{(\ref{eq:eq23})}=o(\varepsilon).
\end{equation}
Note that $X_0(w -W)$ vanishes identically. Therefore, Eq. (\ref{eq:eq24}) becomes $X_1(w -W)-X_0(v)=0$. Therefore, we obtain
the following differential equation for $v(t,x)$ as
\begin{eqnarray*}
6t v_t=\frac{k_1+k_4 c}{6t}+k_2\frac{c-x}{6t^2}+\frac{c-x}{3t}.
\end{eqnarray*}
As you can see,  the constant $k_3$ is removed. It is easy to integrate this equation in the ``natural" variables $z=W=(c-x)/(6t)$, \  $y=t$. 
Then it becomes: $v_z+(k_1+k_4 c+6k_2z)/(6y)+2z=0$. The integration yields
\begin{eqnarray*}
v=-\frac{k_1z+k_4 cz+3k_2z^2}{6y}-z^2+F(y).
\end{eqnarray*}
Returning to the variables $t$, $x$, we have 
\begin{eqnarray*}
v=-\frac{(k_1+k_4 c)(c-x)}{36t^2}-k_2\frac{(c-x)^2}{72t^3}-\Big( \frac{c-x}{6t} \Big)^2+F(t).
 \end{eqnarray*}
Inserting this $v$ in (\ref{eq:eq23}) and substituting in the perturbed Gardner equation  we obtain $tF'(t)+F(t)=0$, so $F(t)=C/t$, where $C$ is an arbitrary constant. 

Thus, the approximate symmetry $X_{\varepsilon}$ provides the following the approximately  Galilean-invariant solutions:
 \begin{eqnarray*}
 w=W+\varepsilon\Big(-\frac{(k_1+k_4 c) (c-x)}{36t^2}-k_2\frac{(c-x)^2}{72t^3}-\Big( \frac{c-x}{6t} \Big)^2+\frac{C}{t}  \Big).
 \end{eqnarray*}
Galilean-invariant solution, means $W$, and Approximate Galilean-invariant solution, means $w$, are diplayed below for $-3\leqslant x\leqslant 3$, $0.1\leqslant t \leqslant 3$, $c=C=k_1=k_2=k_4=1$ and $ \varepsilon=0.1$, respectively.

\end{document}